\begin{document}

\def\ov#1{{\overline{#1}}} 
\def\un#1{{\underline{#1}}}
\def\wh#1{{\widehat{#1}}}
\def\wt#1{{\widetilde{#1}}}

\newcommand{\Ch}{{\operatorname{Ch}}}
\newcommand{\Elim}{{\operatorname{Elim}}}
\newcommand{\proj}{{\operatorname{proj}}}
\newcommand{\h}{{\operatorname{h}}}

\newcommand{\hh}{\mathrm{h}}
\newcommand{\aff}{\mathrm{aff}}
\newcommand{\Spec}{{\operatorname{Spec}}}
\newcommand{\Res}{{\operatorname{Res}}}
\newcommand{\Orb}{{\operatorname{Orb}}}

\renewcommand*{\backref}[1]{}
\renewcommand*{\backrefalt}[4]{%
    \ifcase #1 (Not cited.)%
    \or        (p.\,#2)%
    \else      (pp.\,#2)%
    \fi}
\def\lc{{\mathrm{lc}}}
\newcommand{\hcan}{{\operatorname{\wh h}}}

\newcommand{\hooklongrightarrow}{\lhook\joinrel\longrightarrow}

\newcommand{\bfa}{{\boldsymbol{a}}}
\newcommand{\bfb}{{\boldsymbol{b}}}
\newcommand{\bfc}{{\boldsymbol{c}}}
\newcommand{\bfd}{{\boldsymbol{d}}}
\newcommand{\bff}{{\boldsymbol{f}}}
\newcommand{\bfg}{{\boldsymbol{g}}}
\newcommand{\bfell}{{\boldsymbol{\ell}}}
\newcommand{\bfp}{{\boldsymbol{p}}}
\newcommand{\bfq}{{\boldsymbol{q}}}
\newcommand{\bfs}{{\boldsymbol{s}}}
\newcommand{\bft}{{\boldsymbol{t}}}
\newcommand{\bfu}{{\boldsymbol{u}}}
\newcommand{\bfv}{{\boldsymbol{v}}}
\newcommand{\bfw}{{\boldsymbol{w}}}
\newcommand{\bfx}{{\boldsymbol{x}}}
\newcommand{\bfy}{{\boldsymbol{y}}}
\newcommand{\bfz}{{\boldsymbol{z}}}

\newcommand{\bfA}{{\boldsymbol{A}}}
\newcommand{\bfF}{{\boldsymbol{F}}}
\newcommand{\bfG}{{\boldsymbol{G}}}
\newcommand{\bfR}{{\boldsymbol{R}}}
\newcommand{\bfQ}{{\boldsymbol{Q}}}
\newcommand{\bfT}{{\boldsymbol{T}}}
\newcommand{\bfU}{{\boldsymbol{U}}}
\newcommand{\bfX}{{\boldsymbol{X}}}
\newcommand{\bfY}{{\boldsymbol{Y}}}
\newcommand{\bfZ}{{\boldsymbol{Z}}}

\newcommand{\bfeta}{{\boldsymbol{\eta}}}
\newcommand{\bfxi}{{\boldsymbol{\xi}}}
\newcommand{\bfrho}{{\boldsymbol{\rho}}}

\newcommand{\PreP}{{\mathrm{PrePer}}}

\def\fM{{\mathfrak M}}
\def\fE{{\mathfrak E}}
\def\fF{{\mathfrak F}}



\newfont{\teneufm}{eufm10}
\newfont{\seveneufm}{eufm7}
\newfont{\fiveeufm}{eufm5}
%
%
\newfam\eufmfam
                \textfont\eufmfam=\teneufm \scriptfont\eufmfam=\seveneufm
                \scriptscriptfont\eufmfam=\fiveeufm
%
%
\def\frak#1{{\fam\eufmfam\relax#1}}
%

\def\ts{\thinspace}

\newtheorem{theorem}{Theorem}
\newtheorem{lemma}[theorem]{Lemma}
\newtheorem{claim}[theorem]{Claim}
\newtheorem{cor}[theorem]{Corollary}
\newtheorem{prop}[theorem]{Proposition}
\newtheorem{question}[theorem]{Open Question}

\newtheorem{rem}[theorem]{Remark}
\newtheorem{definition}[theorem]{Definition}

\newenvironment{dedication}
  {
   \thispagestyle{empty}
   \vspace*{\stretch{1}}
   \itshape             
   \raggedleft          
  }
  {\par 
   \vspace{\stretch{3}} 
  }

\numberwithin{table}{section}
\numberwithin{equation}{section}
\numberwithin{figure}{section}
\numberwithin{theorem}{section}


\def\squareforqed{\hbox{\rlap{$\sqcap$}$\sqcup$}}
\def\qed{\ifmmode\squareforqed\else{\unskip\nobreak\hfil
\penalty50\hskip1em\null\nobreak\hfil\squareforqed
\parfillskip=0pt\finalhyphendemerits=0\endgraf}\fi}

\def\fA{{\mathfrak A}}
\def\fB{{\mathfrak B}}

\def\cA{{\mathcal A}}
\def\cB{{\mathcal B}}
\def\cC{{\mathcal C}}
\def\cD{{\mathcal D}}
\def\cE{{\mathcal E}}
\def\cF{{\mathcal F}}
\def\cG{{\mathcal G}}
\def\cH{{\mathcal H}}
\def\cI{{\mathcal I}}
\def\cJ{{\mathcal J}}
\def\cK{{\mathcal K}}
\def\cL{{\mathcal L}}
\def\cM{{\mathcal M}}
\def\cN{{\mathcal N}}
\def\cO{{\mathcal O}}
\def\cP{{\mathcal P}}
\def\cQ{{\mathcal Q}}
\def\cR{{\mathcal R}}
\def\cS{{\mathcal S}}
\def\cT{{\mathcal T}}
\def\cU{{\mathcal U}}
\def\cV{{\mathcal V}}
\def\cW{{\mathcal W}}
\def\cX{{\mathcal X}}
\def\cY{{\mathcal Y}}
\def\cZ{{\mathcal Z}}

\def\dist#1{\left\|#1\right\|}
\def\nrp#1{\left\|#1\right\|_p}
\def\nrq#1{\left\|#1\right\|_m}
\def\nrqk#1{\left\|#1\right\|_{m_k}}
\def\Ln#1{\mbox{\rm {Ln}}\,#1}
\def\nd{\hspace{-1.2mm}}
\def\ord{{\mathrm{ord}}}
\def\Cc{{\mathrm C}}
\def\Pb{\,{\mathbf P}}

\def\va{{\mathbf{a}}}

\newcommand{\commF}[1]{\marginpar{%
\begin{color}{red}
\vskip-\baselineskip 
\raggedright\footnotesize
\itshape\hrule \smallskip F: #1\par\smallskip\hrule\end{color}}}

\newcommand{\commI}[1]{\marginpar{%
\begin{color}{blue}
\vskip-\baselineskip 
\raggedright\footnotesize
\itshape\hrule \smallskip I: #1\par\smallskip\hrule\end{color}}}

\newcommand{\commO}[1]{\marginpar{%
\begin{color}{cyan}
\vskip-\baselineskip 
\raggedright\footnotesize
\itshape\hrule \smallskip O: #1\par\smallskip\hrule\end{color}}}




\newcommand{\ignore}[1]{}

\def\vec#1{\boldsymbol{#1}}

\def\e{\mathbf{e}}



\def\GL{\mathrm{GL}}

\hyphenation{re-pub-lished}

\def\rank{{\mathrm{rk}\,}}
\def\dd{{\mathrm{dyndeg}\,}}
\def\lcm{{\mathrm{lcm}\,}}

\def\A{\mathbb{A}}
\def\B{\mathbf{B}}
\def \C{\mathbb{C}}
\def \F{\mathbb{F}}
\def \K{\mathbb{K}}
\def \L{\mathbb{L}}
\def \Z{\mathbb{Z}}
\def \P{\mathbb{P}}
\def \R{\mathbb{R}}
\def \Q{\mathbb{Q}}
\def \N{\mathbb{N}}
\def \U{\mathbb{U}}
\def \Z{\mathbb{Z}}

\def \nd{{\, | \hspace{-1.5 mm}/\,}}

\def\mand{\qquad\mbox{and}\qquad}

\def\Zn{\Z_n}

\def\Fp{\F_p}
\def\Fq{\F_q}
\def \fp{\Fp^*}
\def\\{\cr}
\def\({\left(}
\def\){\right)}
\def\fl#1{\left\lfloor#1\right\rfloor}
\def\rf#1{\left\lceil#1\right\rceil}
\def\vh{\mathbf{h}}
\def\ov#1{{\overline{#1}}}
\def\un#1{{\underline{#1}}}
\def\wh#1{{\widehat{#1}}}
\def\wt#1{{\widetilde{#1}}}
\newcommand{\abs}[1]{\left| #1 \right|}

\def\ZK{\Z_\K}
\def\LH{\cL_H}

\def \fI{\mathfrak{I}}
\def \fJ{\mathfrak{J}}
\def \fV{\mathfrak{V}}

\title[Lattice counting problem, II]
{On the error term of a lattice counting problem, II}

\author[O. Bordell\`{e}s]{Olivier Bordell\`{e}s}
\address{O.B.: 2 All\'ee de la combe, 43000 Aiguilhe, France}
\email{borde43@wanadoo.fr}

\keywords{Lattices, Weyl's bound for exponential sums, sums of fractional parts of polynomials.}

\subjclass[2010]{Primary 11L07, 11L15, 11P21}

\date{}

\begin{abstract} Under the Riemann Hypothesis, we improve the error term in the asymptotic formula related to the counting lattice problem studied in a first part of this work. The improvement comes from the use of Weyl's bound for exponential sums of polynomials and a device due to Popov allowing us to get an improved main term in the sums of certain fractional parts of polynomials.
\end{abstract}

\maketitle

\section{Introduction and result}

This work is the continuation of the paper \cite{borls} in which the following problem is studied. For integer $T \geqslant 1$, we let
$$
\cF(T) := \{a/b:(a,b)\in \Z^2, \ 0 \leqslant a < b \leqslant T, \ (a,b)=1\}
$$ 
be the set of Farey fractions. We  also define
$$
\cI(T) = \cF(T) \cap \left[ 0, \tfrac{1}{2} \right]
$$
and consider the quantity
$$
C(T) = \sum_{a/b \in \cI(T)} \#  \cC_{a,b}(T),
$$
where 
$$
\cC_{a,b}(T) :=  \cF(T) \cap \left [1-a^2/b^2,1 \right ].
 $$

As it is mentioned in \cite{borls}, this quantity $C(T)$ appears naturally in some counting problems for two-dimensional lattices and the main term of the asymptotic formula for $C(T)$ can be expressed via the cardinality 
$$
F(T) := \# \cF(T)
$$
of the set of Farey fractions and also second moment 
of the Farey fractions in $[0,\frac{1}{2}]$:
$$
G(T) := \sum_{\substack{\xi \in \cF(T)\\\xi \leqslant 1/2}} \xi^2.$$
More precisely, it is shown \cite[Theorem~1.1]{borls} that, unconditionally
$$C(T)  =  F(T)G(T) + O\left(T^3\delta(T^{1/2})\log T\right)$$
where $\delta(t)$ is the usual number-theoretic remainder function defined as 
$$ \delta(t) := \exp(-c(\log t)^{3/5} (\log\log t)^{-1/5}) \quad \left( c > 0 \right).$$
Under the Riemann Hypothesis, one can improve on the error term by using the well-known estimate \cite{bal}
\begin{equation}
   M(t) := \sum_{n \leqslant t} \mu(n) \ll t^{1/2} \rho(t) \label{Mertens:RH}
\end{equation}
where
$$\rho(t) := \exp \left( (\log t)^{1/2} (\log \log t)^{5/2+o(1)} \right)$$
and the authors derived in \cite[Theorem~1.4]{borls} the estimate
$$C(T)  =  F(T)G(T) + O\left(T^{752/283} \rho(T)  \log T \right)$$
under RH, with the help of Bourgain's exponent pair $(k, \ell) = \left ( \frac{13}{84}, \frac{55}{84} \right )$. Furthermore, it is pointed out that, if the \textit{exponent pair conjecture} is true, then the error-term may be sharpened to $O(T^{5/2 + o(1)})$. Note that
$$\frac{752}{283} \doteq \np{2.6572}.$$

The aim of this work is to show that there is no need to assume this very difficult conjecture in order to get this estimate. More precisely, we prove

\begin{theorem}
\label{thm:CT FTGT RH}
Assume the Riemann Hypothesis. Then
$$C(T)  =  F(T)G(T) + O\left(T^{5/2+o(1)}\right).$$
\end{theorem}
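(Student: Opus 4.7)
The plan is to revisit the argument for Theorem~1.4 in \cite{borls}, replacing its last exponential-sum step, which relies on Bourgain's exponent pair $(13/84,55/84)$, with a direct appeal to Weyl's inequality for quadratic exponential sums combined with a device due to Popov that refines the main term in sums of fractional parts of polynomials. The appeal of this combination is that it reaches the target exponent $5/2$ without invoking any form of the exponent pair conjecture.

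First, I would re-run the reduction performed in \cite{borls}. Unfolding $\#\cC_{a,b}(T)$ and using Möbius inversion to remove the coprimality condition transforms $C(T)$ into $F(T)G(T)$ plus an error term of the shape
$$
E(T)\;=\;\sum_{e\leqslant T}\mu(e)\sum_{a/b\,\in\,\cI(T)}\sum_{m\leqslant T/e}\psi\!\left(\frac{m\,a^{2}}{b^{2}}\right)\;+\;(\text{smaller pieces}),
$$
where $\psi(x)=x-\lfloor x\rfloor-\tfrac12$. After exchanging the order of summation, the innermost sum over $a$ is a sum of fractional parts of the quadratic polynomial $a\mapsto m a^{2}/b^{2}$ with frozen parameters $m,b$.

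Next I would apply Popov's device for sums of fractional parts of polynomials to $\sum_{a}\psi(m a^{2}/b^{2})$, extracting an improved main term. Summed in $m$, $b$ and $e$, this main-term piece absorbs into $F(T)G(T)$; what is left over is a linear combination of genuine quadratic Weyl sums
$$
S(\alpha,N)\;=\;\sum_{n\leqslant N} e(\alpha\, n^{2}),
$$
arising from a Vaaler (or Erdős--Turán) Fourier expansion of $\psi$, with $\alpha$ a rational multiple of $m/b^{2}$. These are estimated by Weyl's inequality: the denominator $b^{2}$ supplies an automatic rational approximation of the required quality, so that on average in $m$ and $b$ one collects a saving of order $T^{-1/2+o(1)}$ over the trivial bound.

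Finally, the Möbius sum in $e$ is handled by partial summation combined with the RH estimate \eqref{Mertens:RH}, and the remaining dyadic losses are absorbed into the $T^{o(1)}$ factor. Putting everything together yields $E(T)\ll T^{5/2+o(1)}$, which is the desired conclusion. The main obstacle will be to apply Popov's device with enough uniformity in the moving parameters $b$ and $m$ and to verify that the main term it extracts cancels against the corresponding contribution to $F(T)G(T)$ up to the required precision; any uncancelled piece, even of size $T^{5/2}\log^{O(1)}T$ coming from boundary terms, would spoil the $T^{5/2+o(1)}$ bound and force us back to an exponent strictly greater than $5/2$.
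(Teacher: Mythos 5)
Your high-level plan (RH bound for the M\"obius/Mertens part, Weyl's inequality for quadratic sums, Popov's device to improve the fractional-part estimate) is indeed the strategy of the paper, but as written the proposal misdescribes the mechanism at the one place you flag as the main obstacle, and it omits the quantitative step on which the exponent $5/2$ actually hinges. First, Popov's device does not extract a secondary main term from $\sum_a\psi(ma^2/b^2)$ that has to cancel against $F(T)G(T)$. The reduction of \cite{borls} (their (2.2)) already isolates $F(T)G(T)$ completely: what remains is $-\sum_{a/b\in\cI(T)}\sum_{d\le T}M(T/d)\,\psi(da^2/b^2)+O(T^2)$, and this is estimated purely in absolute value, with $M(T/d)\ll (T/d)^{1/2}\rho(T)$ under RH. Popov's trick lives \emph{inside} the proof of the fractional-part bound: after the Fourier expansion of $\psi$ and Weyl differencing one faces sums of the type $\sum_{h\le H}h^{-1}\sum_{\ell}\min\bigl(N,\|\ell h\alpha\|^{-1}\bigr)$, and writing $n=\ell h$ together with the observation that $1/h\le k!/j$ on the $j$-th block tames the divisor weight; the net effect is that the leading term of the bound becomes $N\alpha^{1/2}$ (for $k=2$) instead of van der Corput's $N\alpha^{1/3}$. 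So there is no cancellation to verify, and incidentally a stray loss of $\log^{O(1)}T$ would not spoil a bound of the form $T^{5/2+o(1)}$ in any case.

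Second, the step you treat as routine is the actual content of the proof: one needs a bound for $\sum_{N<n\le 2N,\ (n,b)=1}\psi(n^2\alpha)$ that is uniform in the modulus $b$ and in $\alpha=d/b^2$. The coprimality condition cannot be ignored; removing it by M\"obius over divisors of $b$ costs multiplicative factors of the type $2^{\omega(b)}$, $F_{1/2}(b)$, $F_1(b)$, and these must then be summed over $b\le T$ using their average bounds. Likewise, Weyl's inequality in the form of \cite{mon} requires a rational approximation to $h\,d\,b^{-2}$ of controlled denominator; saying that ``the denominator $b^2$ supplies an automatic rational approximation of the required quality'' glosses over this, and the paper instead proves a separate lemma bounding $\sum\min(L,\|n\alpha\|^{-1})$ for arbitrary real $\alpha$ precisely to sidestep the choice of approximations. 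Finally, the claimed ``average saving of $T^{-1/2+o(1)}$ in $m$ and $b$'' is exactly what must be checked by bookkeeping: with the Popov-improved term $N\alpha^{1/2}$ the final sum is $T^{1/2+o(1)}\sum_{d\le T}d^{-1/2}\bigl(Td^{1/2}+T^{3/2}+T^2d^{-1/2}\bigr)\ll T^{5/2+o(1)}$, whereas the van der Corput term $N\alpha^{1/3}$ yields only about $T^{8/3}$ by the same computation. Without carrying out (or at least quoting in the required uniform form) this fractional-part estimate and the ensuing summation over $b$ and $d$, the proposal asserts the conclusion rather than proving it.
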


The idea is to estimate a sum of fractional parts using Weyl's bound for exponential sums of polynomials \cite{mon} and a device of Popov \cite{pop}, also used by Fomenko \cite{fom}, which allows us to improve the main term in sums of the shape
$$\sum_{N < n \leqslant 2N} \psi \left( P(n) \right)$$
where $P$ is any polynomial of degree $\geqslant 2$ and 
$$\psi(x) := x - \fl x  - \tfrac{1}{2}.$$

\section{Notation}

We take all the notation of \cite{borls} into account, in particular
$$E(T) := C(T) - F(T)G(T)$$
is the error term in the lattice counting problem considered here.
Let $\psi(x) := x - \fl x - \frac{1}{2}$ be the $1$st Bernoulli function and $\dist x  := \min \left( \frac{1}{2} - \psi(x) , \frac{1}{2} + \psi(x) \right)$ is the distance of $x$ to its nearest integer.

\medskip

For any $\beta \geqslant 0$, let $F_\beta$ be the multiplicative function defined by
$$F_\beta(n) :=  \sum_{d \mid n} \frac{\mu(d)^2}{d^\beta}= \prod_{p \mid n} \left( 1+ \frac{1}{p^\beta} \right).$$
Note that $F_0 = 2^\omega$ and
\begin{equation}
   \sum_{n \leqslant x} F_\beta(n) \ll \begin{cases} x \log x, & \textrm{if\ } \beta = 0 \\ x, & \textrm{if\ } \beta > 0. \end{cases} \label{fct:F}
\end{equation}

\medskip

Finally, if $f : \left( M,M+N \right]  \longrightarrow \R$ is any map and $\delta \in \left( 0,\frac{1}{4} \right]$, define
$$\mathcal{R} \left (f,N,\delta\right) := \#  \left \{ n \in \left( M,M+N \right] \cap \Z : \| f(n) \|  < \delta \right \}.$$

\section{Sums of fractional parts of polynomials}

This section is devoted to the proof of the following proposition, generalizing \cite[Theorems~1 and~2]{fom}, and which may have its own interest (see Remark~\ref{rem:VdC} below).

\begin{prop}
\label{prop:fracpol}
Let $q \in \Z_{\geqslant 1}$, $k \in \Z_{\geqslant 2}$, $N \in \Z_{\geqslant 1}$ large and $\alpha > 0$. Then, for any $\varepsilon > 0$
$$\sum_{\substack{N <n \leqslant 2N \\ (n,q)=1}} \psi \left( n^k \alpha \right) \ll_{k,\varepsilon}  \left( N \kappa \right)^\varepsilon \left( N \alpha^{2^{1-k}} F_{1-k2^{1-k}}(q) + N^{1-2^{1-k}} F_{1-2^{1-k}}(q) + \frac{N^{1-k2^{1-k}}F_1(q)}{\alpha^{2^{1-k}}} \right)$$
where $\kappa := \max \left( \alpha,\alpha^{-1} \right)$.
\end{prop}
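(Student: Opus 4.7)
The plan is to first eliminate the coprimality condition by Möbius inversion: writing $\mathbf{1}_{(n,q)=1}=\sum_{d\mid \gcd(n,q)}\mu(d)$ and substituting $n=dm$ reduces the sum to
$$\sum_{d\mid q}\mu(d)\sum_{N/d<m\leq 2N/d}\psi(\beta_d\, m^k),\qquad \beta_d:=d^k\alpha.$$
Only squarefree $d\mid q$ contribute, so it suffices to establish, uniformly in $M\geq 1$ and $\beta>0$, a one-variable inner bound of the shape
$$S(M,\beta):=\sum_{M<m\leq 2M}\psi(\beta m^k)\ll_{k,\varepsilon}(M\kappa_\beta)^\varepsilon\left(M\beta^{2^{1-k}}+M^{1-2^{1-k}}+\frac{M^{1-k2^{1-k}}}{\beta^{2^{1-k}}}\right),$$
with $\kappa_\beta:=\max(\beta,\beta^{-1})$, and then sum over divisors. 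Setting $M=N/d$ and $\beta=d^k\alpha$ and tracking the power of $d$ in each of the three summands produces respectively $d^{k2^{1-k}-1}$, $d^{2^{1-k}-1}$ and $d^{-1}$; summing (absolutely) over squarefree $d\mid q$ then yields exactly $F_{1-k2^{1-k}}(q)$, $F_{1-2^{1-k}}(q)$ and $F_1(q)$, which is the claimed dependence on $q$. The extra $(\kappa_\beta)^\varepsilon$ can be absorbed into $(N\kappa)^\varepsilon q^{O(\varepsilon)}$ after a harmless increase of $\varepsilon$.

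For the inner bound $S(M,\beta)$ I would apply the Vaaler / Erd\H{o}s--Tur\'an Fourier truncation
$$\psi(x)=-\sum_{1\leq|h|\leq H}\frac{1}{2\pi ih}\,e(hx)+R(x,H),$$
with the standard Fej\'er-type remainder controlled by $\min(1,1/(H\|x\|))$, and choose $H$ to be optimised later. The main contribution is then governed by
$$\sum_{1\leq h\leq H}\frac{1}{h}\Bigl|\sum_{M<m\leq 2M}e(h\beta m^k)\Bigr|,$$
and to each inner exponential sum I would apply Weyl's inequality \cite{mon}: for any Dirichlet approximation $|h\beta-a/r|\leq 1/r^2$ with $(a,r)=1$ and $1\leq r\leq M^{k-1}$ one obtains
$$\Bigl|\sum_{M<m\leq 2M}e(h\beta m^k)\Bigr|\ll M^{1+\varepsilon}\left(\frac{1}{r}+\frac{1}{M}+\frac{r}{M^k}\right)^{2^{1-k}}.$$

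The Popov device \cite{pop} (also exploited in \cite{fom}) enters because a naive application would fix a single value of $r$, losing the dependence on $\beta$ and giving only the middle term $M^{1-2^{1-k}}$. Instead, one partitions the range $1\le h\le H$ into dyadic classes according to the size of the denominator $r=r(h)$ in its Diophantine approximation to $h\beta$. For $r\in[R,2R]$, an elementary spacing argument shows that only $O(1+HR\beta)$ values of $h\leq H$ can occur, and the contribution of each regime can be evaluated in closed form in terms of $\beta,R,H,M$. The three extremal regimes $r\asymp 1$, $r\asymp h\beta$ and $r\asymp M^{k-1}$ produce, after choosing $H$ optimally, the three terms $M\beta^{2^{1-k}}$, $M^{1-2^{1-k}}$ and $M^{1-k2^{1-k}}\beta^{-2^{1-k}}$ respectively; the intermediate regimes are dominated by these extremes.

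The main obstacle, and the reason a black-box use of Weyl is insufficient, is the careful book-keeping in this Popov step: one must extract the $\beta$-dependence cleanly from the small-$r$ regime (which yields $M\beta^{2^{1-k}}$), handle the large-$r$ regime symmetrically (giving $M^{1-k2^{1-k}}\beta^{-2^{1-k}}$), and check that both overlap smoothly with the generic Weyl term. Once the three-term inner bound is in place, the substitution $M=N/d$, $\beta=d^k\alpha$ and summation over $d\mid q$ are purely routine and reconstruct the arithmetic functions $F_{1-k2^{1-k}}$, $F_{1-2^{1-k}}$, $F_1$ in exactly the shape claimed.
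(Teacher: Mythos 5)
Your outer reduction (M\"obius over $d\mid q$, then treating each inner sum $\sum_{N/d<m\le 2N/d}\psi(d^k\alpha\, m^k)$) is fine and is essentially the paper's own bookkeeping in a different order; the exponent count reproducing $F_{1-k2^{1-k}}$, $F_{1-2^{1-k}}$, $F_1$ is correct, and the stray factor $\kappa_\beta^\varepsilon$ with $\beta=d^k\alpha$ is absorbed into $(N\kappa)^{O(\varepsilon)}$ not because one tolerates $q^{O(\varepsilon)}$ (the statement has none) but because only $d\le 2N$ give a nonempty inner range. The genuine gap is in the one-variable estimate, which is the whole content of the proposition (it is its $q=1$ case, cf.\ Remark~\ref{rem:VdC}). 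You propose to apply Weyl's inequality in the rational-approximation form and to classify $h\le H$ by the dyadic size of the Dirichlet denominator $r(h)$ of $h\beta$, asserting that a spacing argument limits each class $r\asymp R$ to $O(1+HR\beta)$ values of $h$. That count is false exactly where it matters: with the optimal choice $H\asymp\beta^{-2^{1-k}}$ one has $H\beta=\beta^{1-2^{1-k}}<1$, so your bound would allow only $O(1)$ values of $h$ with $r\asymp 1$; yet whenever $M^{-k}\ll\beta\ll M^{1-k}$ (a range where the asserted bound is still nontrivial) every $h\le\min\bigl(H,1/(M^{k-1}\beta)\bigr)$ satisfies $\|h\beta\|\le M^{1-k}$, hence lies in the class $r=1$, and this count can be a positive power of $M$. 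Worse, for such $h$ (already for $h=1$) Weyl's inequality in the denominator form is trivial, $\ll M^{1+\varepsilon}$, so the $r\asymp1$ class alone overshoots the target term $M^{1-k2^{1-k}}\beta^{-2^{1-k}}$; to recover it you would need a second parameter (the proximity $\|r h\beta\|$ inside each class, i.e.\ a genuine major/minor-arc analysis) which your sketch does not supply.

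This is also not what the Popov device is, and the missing mechanism is precisely what the paper uses instead. One keeps Weyl's inequality in the unapproximated form of \cite{mon}, with the unevaluated quantity $\sum_{\ell\le k!(N/d)^{k-1}}\min\bigl(N/d,\|\ell h d^k\alpha\|^{-1}\bigr)$, applies H\"older in $h$, merges $n=h\ell$, and observes (Popov's point, as in \cite{pop}, \cite{fom}) that in the block $j(N/d)^{k-1}<n\le(j+1)(N/d)^{k-1}$ the divisor weight $\sum_{h\mid n}h^{-1}$ is $\ll\tau(n)\,k!/j$ because $h^{-1}\le k!N^{k-1}/(nd^{k-1})$; this is what removes the factor $H^{2^{1-k}}$ from the main term. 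The $\beta$-dependence is then extracted from a single lemma of the type $\sum_{M<n\le M+N}\min\bigl(L,\|n\alpha\|^{-1}\bigr)\ll LN\alpha+(N+\alpha^{-1})\log L+L$ (the paper's Lemma~\ref{lem:sum}), proved by counting solutions of $\|n\alpha\|<\delta$ and requiring no rational approximation of $\alpha$; its terms $LN\alpha$ and $\alpha^{-1}\log L$ are exactly what become $M\beta^{2^{1-k}}$ and $M^{1-k2^{1-k}}\beta^{-2^{1-k}}$ after taking the $2^{1-k}$-th power. Without this lemma (or a fully worked substitute for the denominator classification), your argument does not close.
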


For the proof, the following lemma is needed. This result is similar to \cite[(9)]{mon} but the statement does not need any rational approximation of $\alpha$ and the method of proof is quite different.

\begin{lemma}
\label{lem:sum}
Let $M \in \Z_{\geqslant 0}$, $N \in \Z_{\geqslant 1}$, $L \in \Z_{\geqslant 4}$ and $\alpha > 0$. Then
$$\sum_{M < n \leqslant M+N} \min \left( L , \frac{1}{\| n \alpha \|} \right) \ll LN \alpha + \left( N + \alpha^{-1} \right) \log L + L.$$
\end{lemma}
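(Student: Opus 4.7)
The plan is to prove the lemma by first bounding the counting function
$$A(\delta) := \# \{ n \in (M, M+N] \cap \Z : \| n\alpha \| \le \delta \}$$
and then performing a dyadic decomposition on the sum $S$ on the left-hand side.

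First I would bound $A(\delta)$ for $\delta \le \tfrac{1}{2}$. For each integer $k$ the condition $| n\alpha - k | \le \delta$ restricts $n$ to a real interval of length $2\delta/\alpha$, and these intervals have disjoint interiors as $k$ varies. On one hand, each such interval contains at most $\fl{2\delta/\alpha} + 1$ integers; on the other, only the values of $k$ lying in an interval of length $N\alpha + 2\delta$ around $M\alpha$ can produce a nonempty intersection with $(M, M+N]$. Multiplying these two factors and distinguishing the cases $2\delta < \alpha$ and $2\delta \ge \alpha$ will yield the uniform bound
$$A(\delta) \ll N\delta + N\alpha + \delta/\alpha + 1.$$
This argument involves no rational approximation $|\alpha - a/q| \le 1/q^2$, which is precisely the feature distinguishing it from the Montgomery-style estimate \cite[(9)]{mon}.

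Next I would split $S$ according to whether $\|n\alpha\|$ is below or above $1/L$:
$$S = L \cdot A(1/L) + \sum_{\substack{M < n \le M+N \\ \|n\alpha\| > 1/L}} \frac{1}{\|n\alpha\|}.$$
The first term is $\ll LN\alpha + L + N + \alpha^{-1}$ by the bound above. For the remaining sum, I would dyadically decompose $\|n\alpha\| \in (1/L, 1/2]$ into the $O(\log L)$ intervals $(2^{-j-1}, 2^{-j}]$ with $1 \le j \le \log_2(L/2)$, and estimate the contribution of the $j$-th bucket by $2^{j+1} A(2^{-j}) \ll N + 2^j N\alpha + \alpha^{-1} + 2^j$. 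Summing over $j$, the terms involving $2^j$ contribute $O(LN\alpha + L)$ while the $j$-independent terms contribute $O((N + \alpha^{-1}) \log L)$, and since $L \ge 4$ one has $\log L \ge 1$, so everything combines into the claimed bound.

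I do not anticipate any real obstacle: the proof consists of a direct counting argument followed by a routine dyadic decomposition. The only point requiring care is keeping the elementary bound on $A(\delta)$ uniform in $\alpha$ by splitting the cases $2\delta < \alpha$ and $2\delta \ge \alpha$, so that the final estimate needs no Diophantine hypothesis on $\alpha$.
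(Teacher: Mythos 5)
Your proof is correct and is essentially the paper's own argument made self-contained: the paper invokes \cite[Lemma~6.45]{bor} for precisely your dyadic decomposition of $\min\left(L,1/\|n\alpha\|\right)$ according to the size of $\|n\alpha\|$, and \cite[Theorem~5.6]{bor} for the counting bound $\mathcal{R}(n\alpha,N,\delta)\ll N\delta+N\alpha+\delta/\alpha+1$, which is exactly your estimate for $A(\delta)$. The only difference is that you prove both ingredients directly (the bound on $A(\delta)$ via the disjoint intervals of length $2\delta/\alpha$ around the integer multiples), which is fine and changes nothing in the structure of the proof.
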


\begin{proof}
From \cite[Lemma~6.45]{bor}, we first have
$$\sum_{M < n \leqslant M+N} \min \left( L , \frac{1}{\| n \alpha \|} \right) \ll N + L \sum_{k=0}^{K-2} 2^{-k} \mathcal{R}\left (n \alpha,N,2^k L^{-1}\right)$$
where $K := \left \lfloor\frac{\log L}{\log 2} \right \rfloor$, and using \cite[Theorem~5.6]{bor} we get
\begin{eqnarray*}
   \sum_{M < n \leqslant M+N} \min \left( L , \frac{1}{\| n \alpha \|} \right) & \ll & N + L \sum_{k=0}^{K-2} 2^{-k} \left( N \alpha + 2^k N L^{-1} + 2^k (L \alpha)^{-1} + 1 \right) \\
   & \ll & N + LN \alpha + \left( N + \alpha^{-1} \right) K + L
\end{eqnarray*}
implying the asserted result.
\end{proof}

We now are in a position to prove Proposition~\ref{prop:fracpol}.

\begin{proof}[Proof of Proposition~\ref{prop:fracpol}]
One may assume $\alpha \in \left( 0,1 \right)$, otherwise $N \alpha^{2^{1-k}} \gg N$. For any $H \in \Z_{\geqslant 1}$, the left-hand side does not exceed
\begin{eqnarray*}
   & \ll & \frac{\varphi(q)}{q} \frac{N}{H} + \sum_{h \leqslant H} \frac{1}{h} \left | \sum_{\substack{N <n \leqslant 2N \\ (n,q)=1}} e \left( hn^k \alpha \right) \right | \\
   & \ll & \frac{N}{H} + \sum_{h \leqslant H} \frac{1}{h} \sum_{d \mid q} \mu(d)^2 \left | \sum_{\frac{N}{d} < n \leqslant \frac{2N}{d}} e \left( h n^k d^k \alpha \right) \right | \\
   & \ll & \frac{N}{H} + \left( \sum_{\substack{d \mid q \\ d > \frac{1}{4}N}} + \sum_{\substack{d \mid q \\ d \leqslant \frac{1}{4}N}} \right) \mu(d)^2 \sum_{h \leqslant H} \frac{1}{h} \left| \sum_{\frac{N}{d} < n \leqslant \frac{2N}{d}} e \left( h n^k d^k \alpha \right) \right | \\
   & \ll & \frac{N}{H} + 2^{\omega(q)} \log H + \sum_{\substack{d \mid q \\ d \leqslant \frac{1}{4}N}} \mu(d)^2 \sum_{h \leqslant H} \frac{1}{h} \left| \sum_{\frac{N}{d} < n \leqslant \frac{2N}{d}} e \left( h n^k d^k \alpha \right) \right |
\end{eqnarray*}
and using Weyl's bound \cite[(4) p. 40]{mon} we get
\begin{eqnarray*}
   \left | \sum_{\substack{N <n \leqslant 2N \\ (n,q)=1}} \psi \left( n^k \alpha \right) \right | & \ll & \frac{N}{H} + 2^{\omega(q)} \log H + \sum_{\substack{d \mid q \\ d \leqslant \frac{1}{4}N}} \mu(d)^2 \sum_{h \leqslant H} \frac{1}{h} \left\lbrace \left( \frac{N}{d} \right)^{2^{k-1}-1} \right. \\
   & & \left. {} + \left( \frac{N}{d} \right)^{2^{k-1}-k + \varepsilon}\sum_{\ell \leqslant k!(N/d)^{k-1}} \min \left( \frac{N}{d} , \frac{1}{\| \ell h  d^k \alpha \|} \right) \right\rbrace^{2^{1-k}} \\
   & \ll & \frac{N}{H} + \left( N^{1-2^{1-k}} F_{1-2^{1-k}}(q) + 2^{\omega(q)} \right) \log H \\
   & & + N^{1-k2^{1-k} + \varepsilon} \sum_{\substack{d \mid q \\ d \leqslant \frac{1}{4}N}} \frac{\mu(d)^2}{d^{1-k2^{1-k}}} \, S_{H,N}(d)
\end{eqnarray*}
where
$$S_{H,N}(d) := \sum_{h \leqslant H} \frac{1}{h} \left( \sum_{\ell \leqslant k!(N/d)^{k-1}} \min \left( \frac{N}{d} , \frac{1}{\| \ell h  d^k \alpha \|} \right) \right)^{2^{1-k}}.$$
Assume $d \leqslant \frac{1}{4}N$. H\"{o}lder's inequality with $\lambda = \frac{2^k}{2^k-2}$ yields
\begin{eqnarray*}
   S_{H,N}(d) & \leqslant & \left( \sum_{h \leqslant H} \frac{1}{h} \right)^{1-2^{1-k}} \left( \sum_{h \leqslant H} \frac{1}{h} \sum_{\ell \leqslant k!(N/d)^{k-1}} \min \left( \frac{N}{d} , \frac{1}{\| \ell h  d^k \alpha \|} \right) \right)^{2^{1-k}} \\
   & \leqslant & ( \log eH)^{1-2^{1-k}} \left( \sum_{n \leqslant k!H(N/d)^{k-1}} \min \left( \frac{N}{d} , \frac{1}{\| n  d^k \alpha \|} \right) \sum_{\substack{h \mid n \\ h \leqslant H \\ n/h \leqslant k!(N/d)^{k-1}}} \frac{1}{h}\right)^{2^{1-k}} \\
   &=& ( \log eH)^{1-2^{1-k}} \left( \sum_{j=0}^{k!H-1} \sum_{j(N/d)^{k-1} < n \leqslant (j+1)(N/d)^{k-1}} \min \left( \frac{N}{d} , \frac{1}{\| n  d^k \alpha \|} \right) \sum_{\substack{h \mid n \\ h \leqslant H \\ n/h \leqslant k!(N/d)^{k-1}}} \frac{1}{h}\right)^{2^{1-k}}.
\end{eqnarray*}
Following \cite[(13),(14)]{pop} (see also \cite{fom}), note that, in the innersum
$$\frac{1}{h} \leqslant \frac{k! N^{k-1}}{nd^{k-1}} < \frac{k!}{j} \quad \left( j \geqslant 1 \right)$$
so that
\begin{eqnarray*}
   S_{H,N}(d) & \leqslant & ( \log eH)^{1-2^{1-k}} \left( \sum_{n \leqslant (N/d)^{k-1}} \frac{\sigma(n)}{n} \min \left( \frac{N}{d} , \frac{1}{\| n  d^k \alpha \|} \right) \right. \\
   & & \left. {} + \sum_{j=1}^{k!H-1} \frac{k!}{j} \sum_{j(N/d)^{k-1} < n \leqslant (j+1)(N/d)^{k-1}} \tau(n) \min \left( \frac{N}{d} , \frac{1}{\| n  d^k \alpha \|} \right) \right)^{2^{1-k}}
\end{eqnarray*}
and the crude bounds $\tau(n) \ll_\varepsilon n^\epsilon$ and $\sigma(n) \ll_\varepsilon n^{1+\epsilon}$, along with Lemma~\eqref{lem:sum} used with $M = j(N/d)^{k-1}$, $N$ replaced by $(N/d)^{k-1}$, $\alpha$ replaced by $d^k \alpha$ and $L=\frac{N}{d} \geqslant 4$, yield
\begin{eqnarray*}
   (NH)^{-\varepsilon} S_{H,N}(d) & \ll & \left\lbrace \left( N^k \alpha + \left( \frac{N}{d} \right)^{k-1} + d^{-k}\alpha^{-1} \right) \left( \sum_{j=1}^{k!H-1} \frac{1}{j} + 1 \right) \right\rbrace^{2^{1-k}} \\
   & \ll & N^{k 2^{1-k}} \alpha^{2^{1-k}} + \left( \frac{N}{d} \right)^{(k-1)2^{1-k}}+ d^{-k 2^{1-k}}\alpha^{-2^{1-k}}.
\end{eqnarray*}
Consequently
\begin{eqnarray*}
   \left | \sum_{\substack{N <n \leqslant 2N \\ (n,q)=1}} \psi \left( n^k \alpha \right) \right | & \ll & \frac{N}{H} + \left( N^{1-2^{1-k}} F_{1-2^{1-k}}(q) + 2^{\omega(q)} \right) \log H \\
   & & {} + H^{\varepsilon} N^{1-k2^{1-k} + \varepsilon} \sum_{\substack{d \mid q \\ d \leqslant \frac{1}{4}N}} \frac{\mu(d)^2}{d^{1-k2^{1-k}}} \left( N^{k 2^{1-k}} \alpha^{2^{1-k}} + \left( \frac{N}{d} \right)^{(k-1)2^{1-k}}+ d^{-k 2^{1-k}}\alpha^{-2^{1-k}} \right) \\
   & \ll & \frac{N}{H} + \left( N^{1-2^{1-k}} F_{1-2^{1-k}}(q) + 2^{\omega(q)}\right) \log H \\
   & & {} + (NH)^{\varepsilon} \left( N \alpha^{2^{1-k}} F_{1-k2^{1-k}}(q) + N^{1-2^{1-k}} F_{1-2^{1-k}}(q) + \frac{N^{1-k2^{1-k}}F_1(q)}{\alpha^{2^{1-k}}} \right)
\end{eqnarray*}
and the choice of $H = \left \lfloor \alpha^{-2^{1-k}} \right \rfloor$ allows us to achieve the proof.
\end{proof}

\begin{rem}
\label{rem:VdC}
With $q=1$, Proposition~\ref{prop:fracpol} yields
$$(N\kappa)^{-\varepsilon} \sum_{N <n \leqslant 2N} \psi \left( n^k \alpha \right) \ll_{k,\varepsilon} N \alpha^{2^{1-k}} + N^{1-2^{1-k}} + N^{1-k2^{1-k}}\alpha^{-2^{1-k}}$$
whereas Van der Corput's method \cite[Theorem~2.8]{GrKol} provides 
$$\sum_{N <n \leqslant 2N} \psi \left( n^k \alpha \right) \ll_k N \alpha^{1/(2^k-1)} + N^{1-2^{1-k}} + N^{1 - 2^{1-k}-2^{4-2k}} \alpha^{-2^{1-k}}$$
so that, for any $k \in \Z_{\geqslant 2}$ and $\alpha \in \left( 0,1 \right)$, Proposition~\ref{prop:fracpol} improves significantly the first term, sometimes called the main term, and the secondary terms are of the same strength.
\end{rem}

\section{Proof of Theorem~\ref{thm:CT FTGT RH}}

From \cite[(2.2)]{borls} we get

\begin{eqnarray*}
   E(T) &=& - \sum_{a/b \in \cI(T)} \sum_{d \leqslant T} M \left( \frac{T}{d} \right) \psi \left( \frac{da^2}{b^2} \right ) - \frac{1}{2} \sum_{a/b \in I(T)} \sum_{d \leqslant T} M \left( \frac{T}{d} \right) \\
   &=& - \sum_{a/b \in \cI(T)} \sum_{d \leqslant T} M \left( \frac{T}{d} \right) \psi \left( \frac{da^2}{b^2} \right ) + O \left( T^2 \right) := \Sigma(T) + O \left( T^2 \right)
\end{eqnarray*}
say. Now from \eqref{Mertens:RH}

\begin{eqnarray*}
    \left | \Sigma(T) \right | & = & \left | \sum_{d \leqslant T} M \left( \frac{T}{d} \right) \sum_{b \leqslant T} \sum_{\substack{a \leqslant \frac{1}{2}b \\ (a,b)=1}} \psi \left( \frac{da^2}{b^2} \right ) \right | \\
   & \ll & T^{1/2} \rho(T) \sum_{d \leqslant T} \frac{1}{d^{1/2}} \sum_{b \leqslant T} \left | \sum_{\substack{a \leqslant \frac{1}{2}b \\ (a,b)=1}} \psi \left( \frac{da^2}{b^2} \right ) \right | \\
   & \ll & T^{1/2} \rho(T) \sum_{d \leqslant T} \frac{1}{d^{1/2}} \, \sum_{b \leqslant T} \, \max_{A \leqslant \frac{1}{2}b} \left | \sum_{\substack{A < a \leqslant 2A \\ (a,b)=1}} \psi \left( \frac{da^2}{b^2} \right ) \right | \log b.
\end{eqnarray*}

We use Proposition~\ref{prop:fracpol} with $k=2$, i.e.
$$\sum_{\substack{N <n \leqslant 2N \\ (n,q)=1}} \psi \left( n^2 \alpha \right) \ll_{k,\varepsilon}  \left( N \kappa \right)^\varepsilon \left( N \alpha^{1/2} 2^{\omega(q)} + N^{1/2} F_{1/2}(q) + \frac{F_1(q)}{\alpha^{1/2}} \right)$$
with $N=A$, $q=b$ and $\alpha = db^{-2}$, yielding

\begin{eqnarray*}
   \left | \Sigma(T) \right | & \ll & T^{1/2 + o(1)} \sum_{d \leqslant T} \frac{1}{d^{1/2}} \sum_{b \leqslant T} \left( d^{1/2} 2^{\omega(b)} + b^{1/2} F_{1/2}(b)  + \frac{b F_1(b)}{d^{1/2}} \right)  \\
   & \ll & T^{1/2 + o(1)} \sum_{d \leqslant T} \frac{1}{d^{1/2}} \left( Td^{1/2} + T^{3/2} + T^2 d^{-1/2} \right)  \\
   & \ll & T^{5/2+o(1)}
\end{eqnarray*}
where we used the bound \eqref{fct:F} in the penultimate line. This completes the proof.
\qed

\section*{Acknowledgements}
The author deeply thanks Profs.~Florian Luca and~Igor Shparlinski for their precious advice and unfailing support.

\end{document}